\def\qed{\nopagebreak\hfill{\rule{4pt}{7pt}}}
\newtheorem{theo}{Theorem}[section]
\newtheorem{lem}[theo]{Lemma}
\newdimen\Squaresize \Squaresize=11pt
\newdimen\Thickness \Thickness=0.7pt
\def\Square#1{\hbox{\vrule width \Thickness
   \vbox to \Squaresize{\hrule height \Thickness\vss
    \hbox to \Squaresize{\hss#1\hss}
   \vss\hrule height\Thickness}
\unskip\vrule width \Thickness} \kern-\Thickness}
\def\Vsquare#1{\vbox{\Square{$#1$}}\kern-\Thickness}
\def\moins{\raise 1pt\hbox{{$\scriptstyle -$}}}
\begin{document}

\begin{center}
{\large \bf On some divisibility properties of binomial sums}
\end{center}

\begin{center}
Brian Yi Sun\\[6pt]
Department of Mathematics and System Science,\\
Xinjiang University, Urumqi, Xinjiang 830046, P. R. China\\[6pt]
Email: {\tt brianys1984@126.com}
\end{center}
\vspace{0.3cm} \noindent{\bf Abstract.} In this paper, we consider two particular binomial sums
\begin{align*}
\sum_{k=0}^{n-1}(20k^2+8k+1){\binom{2k}{k}}^5 (-4096)^{n-k-1}
\end{align*}
and
\begin{align*}
\sum_{k=0}^{n-1}(120k^2+34k+3){\binom{2k}{k}}^4\binom{4k}{2k} 65536^{n-k-1},
\end{align*}
which are inspired by two series for $\frac{1}{\pi^2}$ obtained by Guillera.
We consider their divisibility properties and prove that they are divisible by $2n^2 \binom{2n}{n}^2$ for all integer $n\geq 2$.
These divisibility properties are stronger than those divisibility results found by He, who proved the above two sums are divisible by $2n \binom{2n}{n}$ with  the WZ-method.

\noindent {\bf Keywords:} Divisibility; Binomial coefficients; Binomial sums; Hypergeometric functions; The WZ-method.

\noindent {\bf AMS Classification:} Primary 11A05, 11A07; Secondary 05A10, 11B65.

\section{Introduction}
In recent years, many people studied some divisibility properties of binomial coefficients. For example, Guo \cite{guo} proved some divisibility properties of binomial coefficients such as

\begin{align*}
&(2n+3)\binom{2n}{n}\biggm| 3\binom{6n}{3n}\binom{3n}{n},\\
&(10n+3)\binom{3n}{n}\biggm| 21\binom{15n}{5n}\binom{5n}{n}.
\end{align*}
Z.-W. Sun \cite{sun2,sun3} found the following divisibility properties involving binomial coefficients,
\begin{align*}
&2(2n+1)\binom{2n}{n}\biggm| \binom{6n}{3n}\binom{3n}{n},\\
&4(2n+1)\binom{2n}{n}\biggm|  \sum_{k=0}^n(20k+3){\binom{2k}{k}}^2\binom{4k}{2k}(-1024)^{n-k}.
\end{align*}
Moreover, some other divisibility properties on central binomial coefficients were investigated by many authors.  The reader can be referred to
\cite{calkin,guo,guo1,guo2} for more information and references therein. For some super-congruences on binomial sums, see, for example, \cite{chenxiehe,GZ,sun1,sun4,zudilin}.

Recently, studying on divisibility properties of binomial sums has aroused great interest of many scholars since  Z.-W. Sun proposed several conjectures on divisibilities involving central binomial coefficients in \cite{sun1}. Though these conjectures have been proved by He \cite{he1,he2} and  Mao and Zhang \cite{maozhang}, we list them in the following so that the interested reader can read them conveniently.  In fact, it is worthy noting that the binomial sums considered in this paper are inspired by two series for $\frac{1}{\pi^2}$ obtained by Guillera, it seems that there are similar divisibility properties related to every formula in the family introduced in \cite{gui03}.
\begin{theo}[{\cite[Conjecture 5.1(i)]{sun1}}]\label{theorem_divs}
For all integer $n>1$,
\begin{align*}
&2n\binom{2n}{n}\biggm|\sum_{k=0}^{n-1}(3k+1){\binom{2k}{k}}^3(-8)^{n-k-1},\\
&2n\binom{2n}{n}\biggm|\sum_{k=0}^{n-1}(3k+1){\binom{2k}{k}}^3 16^{n-k-1},\\
&2n\binom{2n}{n}\biggm|\sum_{k=0}^{n-1}(6k+1){\binom{2k}{k}}^3 256^{n-k-1},\\
&2n\binom{2n}{n}\biggm|\sum_{k=0}^{n-1}(6k+1){\binom{2k}{k}}^3 (-512)^{n-k-1},\\
&2n\binom{2n}{n}\biggm|\sum_{k=0}^{n-1}(42k+5){\binom{2k}{k}}^3 4096^{n-k-1}.
\end{align*}
\end{theo}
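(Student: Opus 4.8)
The plan is to treat all five statements uniformly, since each has the shape
$$S_n=\sum_{k=0}^{n-1}(ak+b)\binom{2k}{k}^3\,c^{\,n-1-k}$$
for a suitable triple $(a,b,c)$, and to prove $2n\binom{2n}{n}\mid S_n$ for every $n\ge 2$. First I would record the first-order recurrence obtained by peeling off the top term, namely $S_{n+1}=c\,S_n+(an+b)\binom{2n}{n}^3$ with $S_1=b$. This already pins down the hypothesis $n>1$: for the first sum $S_1=1$ is not divisible by $2\binom 21=4$, while a direct check gives $S_2=24=2\cdot 2\binom 42$, so any induction has to be anchored at $n=2$.

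The genuine difficulty is that the modulus $2n\binom{2n}{n}$ varies with $n$, so the recurrence does not by itself propagate the divisibility. Writing $b_n:=S_n/\bigl(2n\binom{2n}{n}\bigr)$ and using $\binom{2n+2}{n+1}=\tfrac{2(2n+1)}{n+1}\binom{2n}{n}$, the recurrence turns into $b_{n+1}=\tfrac{cn}{2(2n+1)}\,b_n+\tfrac{an+b}{4(2n+1)}\binom{2n}{n}^2$, whose coefficients are not integers, so naive induction fails. To overcome this I would use the WZ method, following He's approach quoted above. Concretely, I would view the normalized summand as a hypergeometric term in the pair $(n,k)$ and run Gosper's algorithm / Zeilberger's creative telescoping to obtain a companion $G(n,k)$ for which the summand telescopes in $k$ up to a controlled $n$-dependent correction; summing over $0\le k\le n-1$ then collapses the telescoped part to the boundary values $G(n,0)$ and $G(n,n)$, and yields an identity for $S_n$ in which the central binomial $\binom{2n}{n}$ occurs as an explicit factor.

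From such a representation the divisibility is extracted arithmetically. The outstanding factor of $2n$ and the integrality of the accompanying cofactor I would settle prime-by-prime, comparing $\nu_p\!\bigl(\binom{2n}{n}^3\bigr)$ against $\nu_p\!\bigl(2n\binom{2n}{n}\bigr)$ and rewriting the cofactor as a manifestly integral sum by means of identities like $\binom{n+k}{2k}\binom{2k}{k}=\binom{n+k}{k}\binom nk$. I expect the main obstacle to be exactly the construction and verification of the telescoping certificate together with the control of its denominators: once a valid identity is in hand the $p$-adic bookkeeping is routine, but producing one certificate that serves all five triples $(a,b,c)$ at once—i.e.\ organizing the five cases around a single parametrized hypergeometric identity—is the delicate point, and it is the same mechanism that must subsequently be sharpened to deliver the stronger $2n^2\binom{2n}{n}^2$ divisibility announced in the abstract.
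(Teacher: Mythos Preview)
The paper does not actually prove this theorem. It is quoted as background: the text immediately preceding it says ``Though these conjectures have been proved by He \cite{he1,he2} and Mao and Zhang \cite{maozhang}, we list them in the following so that the interested reader can read them conveniently.'' There is therefore no proof in the paper to compare your proposal against; the paper's own contributions are Theorems~\ref{my-theorem-1} and~\ref{my-theorem-2}, which concern different sums.

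That said, your plan is broadly in the spirit of He's proofs and of the paper's own arguments for its main results. A few points are worth sharpening. First, the way the WZ relation is exploited here is not the ``sum over $k$ and collapse to boundary values $G(n,0)$, $G(n,n)$'' picture you describe. Rather, one fixes the WZ identity $F(n,k-1)-F(n,k)=G(n+1,k)-G(n,k)$, sums over $n$ from $0$ to $N-1$ for each $k$, and then sums over $k$; this is packaged as Lemma~\ref{my-lemma-1} (He's Lemma~2.3), and the divisibility reduces to checking that $P(N)$ divides $B^N\sum_{k}G(N,k)$ and $B^N F(N-1,N-1)$. Second, there is no single parametrized certificate covering all five triples $(a,b,c)$: each divisibility requires its own explicit WZ pair $(F,G)$, and the verification that the resulting $G(N,k)$ and boundary term are divisible by $2N\binom{2N}{N}$ is done case by case via elementary binomial identities and Kummer/Legendre-type $p$-adic estimates. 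Your remark about ``one certificate that serves all five triples'' is not how the cited proofs proceed, and attempting it would likely be the wrong organizing principle.
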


Besides, He \cite{he1,he2} also found two similar divisibilities involving binomial sums by using WZ-method,
\begin{theo}[{\cite[Theorem 1.1(1.4), Theorem 1.2]{he1,he2}}]\label{he's results}
For $n\geq2$, we have
\begin{align*}
&2n\binom{2n}{n}\biggm|\sum_{k=0}^{n-1}(20k^2+8k+1){\binom{2k}{k}}^5 (-4096)^{n-k-1},\\
&2n\binom{2n}{n}\biggm|\sum_{k=0}^{n-1}(120k^2+34k+3){\binom{2k}{k}}^4\binom{4k}{2k} 65536^{n-k-1}.
\end{align*}
\end{theo}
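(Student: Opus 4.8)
The plan is to establish the stronger divisibility announced in the abstract, namely that $2n^2\binom{2n}{n}^2$ divides each sum for $n\ge 2$; since $2n\binom{2n}{n}\mid 2n^2\binom{2n}{n}^2$, this immediately yields the statement of Theorem~\ref{he's results}. Write $S(n)=\sum_{k=0}^{n-1}(20k^2+8k+1)\binom{2k}{k}^5(-4096)^{n-1-k}$ for the first sum and $T(n)$ for the second, and record the first-order recurrence $S(n)=-4096\,S(n-1)+(20(n-1)^2+8(n-1)+1)\binom{2n-2}{n-1}^5$. A direct induction on this recurrence fails, because the target modulus $2n^2\binom{2n}{n}^2$ grows with $n$ and does not divide $2(n-1)^2\binom{2n-2}{n-1}^2$ in a controlled way; so instead I would aim for an explicit closed-form rewriting of $S(n)$ that exposes the central-binomial factor.

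The main engine is the WZ-method applied to the Guillera series $\sum_{k\ge0}(-1)^k(20k^2+8k+1)\binom{2k}{k}^5/2^{12k}=8/\pi^2$, of which $S(n)$ is the reweighted partial sum. I would construct a WZ pair $(F,G)$, with $F(n,k)$ the natural hypergeometric term attached to this series and $G(n,k)=R(n,k)F(n,k)$ its rational certificate, satisfying $F(n+1,k)-F(n,k)=G(n,k+1)-G(n,k)$. Summing this relation over $0\le k\le n-1$ and simplifying the telescoped boundary terms should produce a finite identity of the shape $S(n)=\binom{2n}{n}^2\,U(n)$, where $U(n)$ is an explicit sum of rationals assembled from the values $G(n,0)$, $G(n,n)$ and the constant of the series. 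The analogous construction for the second Guillera series $\sum_{k\ge0}(120k^2+34k+3)\binom{2k}{k}^4\binom{4k}{2k}/2^{16k}=32/\pi^2$ handles $T(n)$. Deriving and verifying this certificate is the crux: the quadratic prefactors $20k^2+8k+1$ and $120k^2+34k+3$ together with the fifth and fourth powers of the binomials make $R(n,k)$ bulky, and one must confirm the certificate relation as a genuine rational-function identity in $n$ and $k$.

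With such an identity in hand, the remaining step is purely arithmetic. It suffices to prove, for every prime $p$, the valuation bound $v_p(S(n))\ge v_p(2)+2v_p(n)+2\,v_p\!\binom{2n}{n}$, and similarly for $T(n)$. Since $\binom{2n}{n}^2$ already appears as an explicit factor, this reduces to showing $2n^2\mid U(n)$ prime-by-prime, for which I would use Legendre's formula and Kummer's theorem (so that $v_p\binom{2n}{n}$ counts the carries in the base-$p$ addition $n+n$), together with the elementary facts $\binom{2n}{n}=2\binom{2n-1}{n-1}$ and $v_2\binom{2n}{n}=s_2(n)\ge1$. Extracting the two extra powers of $n$ and the extra factor $2$ is the delicate part, requiring one to track cancellations in $U(n)$ at primes dividing $n$ and at $p=2$; this is exactly where the strengthening over He's $2n\binom{2n}{n}$ is located, and once the closed form is secured the weaker divisibility of Theorem~\ref{he's results} follows a fortiori. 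I therefore expect the construction and verification of the WZ certificate in the normalized form displaying the $\binom{2n}{n}^2$ factor to be the main obstacle, with the concluding valuation estimates being routine but careful.
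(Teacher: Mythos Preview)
Your overall strategy matches the paper's: prove the stronger divisibility by $2n^{2}\binom{2n}{n}^{2}$ via the WZ machinery and deduce Theorem~\ref{he's results} as a corollary. But the telescoping you describe does not produce what you claim. Summing the relation $F(n+1,k)-F(n,k)=G(n,k+1)-G(n,k)$ over $0\le k\le n-1$ gives a recurrence for the \emph{row sums} $\sum_{k}F(n,k)$, not for the \emph{column} partial sum $S(N)=\sum_{n=0}^{N-1}F(n,0)$ that you want. There is no reason to expect an identity of the shape $S(n)=\binom{2n}{n}^{2}U(n)$ with a manageable $U(n)$ to emerge from that single summation, and in fact the paper never obtains such a closed form.

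What the paper actually does is use the ``transposed'' WZ relation $F(n,k-1)-F(n,k)=G(n+1,k)-G(n,k)$ together with He's diagonal telescoping lemma (Lemma~\ref{my-lemma-1}): under the support condition $F(n,k)=G(n,k)=0$ for $k>n$, one has
\[
B^{N}\sum_{n=0}^{N-1}F(n,0)\equiv B^{N}\sum_{k=1}^{N-1}G(N,k)+B^{N}F(N-1,N-1)\pmod{P(N)}.
\]
So the proof reduces to two term-by-term checks, not to analysing a single closed form $U(n)$. The factor $n^{2}\binom{2n}{n}^{2}$ is not extracted by a valuation argument on $U(n)$; it is already visible in the explicit companion $G(N,k)$, which carries a prefactor $N^{2}\binom{2N}{N}^{3}$ (respectively $N^{3}\binom{2N}{N}^{3}$) by design. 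The remaining arithmetic is then a collection of concrete divisibilities of the type in Lemmas~\ref{lem:div6-3}--\ref{lem:1-div}, proved via Legendre's formula through floor-function inequalities, rather than the Kummer-carry count you propose. Your plan is salvageable, but you need the correct double telescoping and should drop the expectation of a closed form; the key missing idea is Lemma~\ref{my-lemma-1}.
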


In this paper, we in fact prove the following stronger results than Theorem \ref{he's results}.

\begin{theo}\label{my-theorem-1} For $n\geq 2$, we have
\begin{align}\label{my-div-1}
2n^2\binom{2n}{n}^2\biggm|\sum_{k=0}^{n-1}(20k^2+8k+1){\binom{2k}{k}}^5 (-4096)^{n-k-1}.
\end{align}
\end{theo}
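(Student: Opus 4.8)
The plan is to reduce \eqref{my-div-1} to a single arithmetic congruence and then to power that congruence with an explicit telescoping identity refining He's WZ computation. Write
\begin{equation}\label{recS}
S(n)=\sum_{k=0}^{n-1}(20k^{2}+8k+1)\binom{2k}{k}^{5}(-4096)^{n-k-1},
\end{equation}
so that $S(1)=1$ and $S(n+1)=-4096\,S(n)+(20n^{2}+8n+1)\binom{2n}{n}^{5}$. The one structural fact I would record at the outset is the ratio $\binom{2n+2}{n+1}=\frac{2(2n+1)}{n+1}\binom{2n}{n}$, which yields
\[
2(n+1)^{2}\binom{2n+2}{n+1}^{2}=8(2n+1)^{2}\binom{2n}{n}^{2}.
\]
Hence, attempting induction on $n$, the target divisor for $S(n+1)$ is $8(2n+1)^{2}\binom{2n}{n}^{2}$, and the inductive step becomes, after cancelling $\binom{2n}{n}^{2}$ and writing $T(n):=S(n)/\bigl(2n^{2}\binom{2n}{n}^{2}\bigr)$,
\[
(20n^{2}+8n+1)\binom{2n}{n}^{3}\equiv 8192\,n^{2}\,T(n)\pmod{8(2n+1)^{2}}.
\]

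I would next dispose of the easy part of this congruence. The modulus splits as $2^{3}\cdot(2n+1)^{2}$ with $(2n+1)^{2}$ odd; since $\binom{2n}{n}=2\binom{2n-1}{n-1}$ is always even, $8\mid\binom{2n}{n}^{3}$, while $20n^{2}+8n+1$ is odd and the right-hand side is visibly divisible by $8$, so the congruence modulo $2^{3}$ is automatic. All of the content therefore lies modulo the odd part $(2n+1)^{2}$, so the entire difficulty of the inductive step is concentrated there. In particular this already localises the sole surviving obstruction beyond He's Theorem~\ref{he's results}, which is the extra factor $n\binom{2n}{n}$.

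The heart of the argument — and the step I expect to be the main obstacle — is that a bare induction cannot close this congruence, because its right-hand side still contains $T(n)$, whose behaviour modulo the moving modulus $(2n+1)^{2}$ is not controlled by the recurrence alone. To get around this I would produce an explicit evaluation of $S(n)$ in which the factor $2n^{2}\binom{2n}{n}^{2}$ is manifest. Concretely, I would take the WZ pair underlying Guillera's series $\sum_{k\ge0}(20k^{2}+8k+1)\binom{2k}{k}^{5}(-4096)^{-k}=8/\pi^{2}$ and specialise it to the finite sum \eqref{recS}: I would search for a bivariate companion $G(n,k)$, assembled from $\binom{2k}{k}^{5}$ times a rational prefactor and vanishing at $k=0$, whose creative-telescoping relation reproduces the recurrence for $S(n)$ and whose telescoped total rewrites $S(n)$ as $\binom{2n}{n}^{2}$ times an explicit finite sum. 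The presence of $\binom{2n}{n}^{2}$ in \eqref{my-div-1} suggests the companion should carry the reflection factor $\binom{n}{k}^{2}\binom{2n}{2k}^{-1}$, so that the identity $\binom{2k}{k}\binom{2n-2k}{n-k}\binom{2n}{n}^{-1}=\binom{n}{k}^{2}\binom{2n}{2k}^{-1}$ can be used to transfer one and then a second central binomial out of each summand.

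With such an identity in hand, divisibility by $\binom{2n}{n}^{2}$ is immediate, and what remains is to check that the residual finite sum supplies the factor $2n^{2}$. Here I would reduce to the odd-part congruence above prime by prime, using classical integrality facts — the Catalan divisibility $(n+1)\mid\binom{2n}{n}$, integrality of the super-Catalan numbers, and the reflection identity already quoted — together with Kummer/Lucas information on $\binom{2n}{n}$ modulo $(2n+1)^{2}$; the factor $n^{2}$ I expect to emerge from the quadratic $20k^{2}+8k+1$ combined with the extra $n$ created by the substitution $k\mapsto n-k$. Finally I would verify the base cases $n=2,3$ directly (for instance $S(2)=-3168=2\cdot2^{2}\binom{4}{2}^{2}\cdot(-11)$) to launch the induction. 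The genuinely hard labour is locating the correct WZ companion for the quintic-power summand and proving that its boundary terms vanish, the certificate being substantially heavier than in the $\binom{2k}{k}^{3}$ case treated by He.
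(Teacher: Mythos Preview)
Your proposal correctly diagnoses why naive induction stalls and correctly points toward the WZ method, but it stops precisely where the actual work begins. You never exhibit a companion $G(n,k)$; you only announce that you ``would search for'' one, and the shape you guess (built from the reflection factor $\binom{n}{k}^{2}\binom{2n}{2k}^{-1}$) is speculation, not a certificate. Likewise, the passage claiming that the residual factor $2n^{2}$ will emerge ``prime by prime, using classical integrality facts \dots\ together with Kummer/Lucas information'' is a promissory note, not an argument: none of those tools is applied to any specific expression. As it stands, the proposal contains no step that could be checked.

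For comparison, the paper does not attempt induction at all. It takes the \emph{known} WZ pair for this sum (the $F(n,k)$ of Guillera--Zudilin--He, with companion $G(n,k)$ proportional to $n^{3}\binom{2n}{n}^{3}\binom{2n+2k}{n+k}\binom{2n-2k}{n-k}\binom{n+k}{n-k}\binom{2k}{k}^{-1}(2n+2k-1)^{-1}$) and invokes He's telescoping lemma, which reduces everything to two concrete divisibilities: that $2N^{2}\binom{2N}{N}^{2}$ divides $(-4096)^{N-1}G(N,k)$ for each $1\le k\le N-1$, and that it divides the boundary term $(-4096)^{N-1}F(N-1,N-1)$. These are then dispatched by explicit binomial identities: the $k\ge2$ case by He's own lemma, the delicate $k=1$ case by the closed form $(2n-1)^{2}\binom{2n-3}{n-1}^{3}$, and the boundary term by the Catalan identity $\binom{4N-4}{2N-2}/(2N-1)=\binom{4N-4}{2N-2}-\binom{4N-4}{2N-3}$. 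Those termwise checks are the substance of the proof, and they are exactly what your outline omits.
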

\begin{theo}\label{my-theorem-2}
For $n\geq 2$, we have
\begin{align}\label{my-div-2}
2n^2\binom{2n}{n}^2\biggm|\sum_{k=0}^{n-1}(120k^2+34k+3){\binom{2k}{k}}^4\binom{4k}{2k} 65536^{n-k-1}.
\end{align}
\end{theo}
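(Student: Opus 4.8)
The plan is to treat Theorem~\ref{my-theorem-2}; Theorem~\ref{my-theorem-1} is proved by the identical scheme with $\binom{2k}{k}^4\binom{4k}{2k}$, $(120k^2+34k+3)$ and $z=65536$ replaced by $\binom{2k}{k}^5$, $(20k^2+8k+1)$ and $z=-4096$, so I will only record the changes at the very end. Write
\[
S(n)=\sum_{k=0}^{n-1}(120k^2+34k+3)\binom{2k}{k}^4\binom{4k}{2k}\,65536^{\,n-k-1},
\]
and set $u(k)=(120k^2+34k+3)\binom{2k}{k}^4\binom{4k}{2k}$, $z=65536=2^{16}$, so that $S(n+1)=z\,S(n)+u(n)$. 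This first-order recurrence makes $S(n)\in\mathbb Z$ transparent but, since the target modulus $2n^2\binom{2n}{n}^2$ depends on $n$, it cannot deliver the divisibility on its own; I will use it only to organise a creative-telescoping argument and, where convenient, to reduce the problem modulo small primes.

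The engine of the proof is a WZ pair attached to Guillera's series $\sum_{k\ge0}(120k^2+34k+3)\binom{2k}{k}^4\binom{4k}{2k}\,2^{-16k}$ for $1/\pi^2$. I would normalise the summand to a bivariate hypergeometric term $F(n,k)$ (the $k$-th term of $S(n)$ divided by a compensating factor in $n$) and, by Zeilberger's algorithm, produce a companion $G(n,k)=R(n,k)F(n,k)$ with rational certificate $R$ obeying $F(n+1,k)-F(n,k)=G(n,k+1)-G(n,k)$. Summing this relation over the triangle $0\le k\le n-1$ telescopes the right-hand side to the boundary data $G(n,n)-G(n,0)$ and turns the left-hand side into $S(n)$ up to an explicit prefactor. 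Guided by He's Theorem~\ref{he's results} and by the shape of $R$, the goal of this step is the closed form
\[
S(n)=2n^2\binom{2n}{n}^2\sum_{k=0}^{n-1}c_{n,k},
\]
in which every $c_{n,k}$ is a quotient of factorials read off from $R(n,k)$ and from the telescoped $G$.

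It then suffices to prove $\sum_k c_{n,k}\in\mathbb Z$, and for this it is enough to check $c_{n,k}\in\mathbb Z$ term by term, which is a statement about $p$-adic valuations of factorials. For odd $p$ this is a routine Kummer/Legendre count on the binomial quotients $c_{n,k}$, and the extra factor $\binom{4k}{2k}$ is absorbed by the same carry count. The delicate prime is $p=2$: because the base is $z=2^{16}$ (respectively $-2^{12}$ for Theorem~\ref{my-theorem-1}), the large power of $2$ carried by $z^{\,n-k-1}$ has to be balanced against $v_2\!\big(\binom{2m}{m}\big)=s_2(m)$, the number of $1$'s in the binary expansion of $m$, uniformly in $k$. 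As a safeguard, He's Theorem~\ref{he's results} already gives $2n\binom{2n}{n}\mid S(n)$, so only the additional factor $n\binom{2n}{n}$ must be squeezed out of $\sum_k c_{n,k}$; equivalently, writing $S(n)=2n\binom{2n}{n}Q(n)$ with $Q(n)\in\mathbb Z$, the recurrence above becomes $4(2n+1)Q(n+1)=2zn\,Q(n)+(120n^2+34n+3)\binom{2n}{n}^3\binom{4n}{2n}$, and the task is reduced to $n\binom{2n}{n}\mid Q(n)$.

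The main obstacle is precisely the construction of the companion $G(n,k)$ in the normalisation that exposes the \emph{full} modulus $2n^2\binom{2n}{n}^2$ rather than He's $2n\binom{2n}{n}$: the extra square of $n\binom{2n}{n}$ is what the strengthening is about, and it is visible only for the correct choice of certificate. The second difficulty, the $2$-adic bookkeeping in the integrality of the $c_{n,k}$, is where that extra factor is actually cashed in, and it is sensitive to $n\bmod 2^{j}$ through the digit sums $s_2(n),s_2(2n)$, and so on. Once these two points are settled, Theorem~\ref{my-theorem-1} follows line for line: replace $(120k^2+34k+3)\binom{2k}{k}^4\binom{4k}{2k}$ by $(20k^2+8k+1)\binom{2k}{k}^5$ and $z=2^{16}$ by $z=-2^{12}$; the companion and all valuation estimates change only in their constants.
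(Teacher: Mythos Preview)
Your overall strategy---a WZ pair built from Guillera's summand, telescoped and then reduced to term-by-term $p$-adic integrality---is exactly the paper's. But what you have submitted is a plan, not a proof: the two items you yourself label ``obstacles'' are precisely where the content lies, and neither is carried out. The paper supplies both pieces explicitly. For the companion it takes
\[
G(n,k)=\frac{n^{2}\,2^{4k-16n+10}\binom{2n}{n}^{3}\binom{n}{k}\binom{k+n}{n-k}\binom{k+2n-1}{n-1}\binom{2k+4n-2}{k+2n-1}}{\binom{2k}{k}^{2}},
\]
and then, via He's Lemma~\ref{my-lemma-1}, reduces \eqref{my-div-2} not to a single closed form $S(n)=2n^{2}\binom{2n}{n}^{2}\sum_{k}c_{n,k}$ as you propose, but to two separate checks: (i)~$2N^{2}\binom{2N}{N}^{2}\mid 2^{16(N-1)}G(N,k)$ for each $1\le k\le N-1$, and (ii)~the same divisibility for the single boundary value $F(N-1,N-1)$. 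Your sketch does not anticipate this boundary term at all; in the paper it requires its own factorial lemma, namely $(2n-1)!\,(2n-2)!\,(3n-3)!\mid(6n-5)!\,(n-1)!$ (Lemma~\ref{lem:1-div}), which is not a consequence of the running-$k$ estimate.

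Your forecast that ``the delicate prime is $p=2$'' and that one must balance $z^{\,n-k-1}$ against binary digit sums $s_{2}(m)$ is also off target. In the paper's normalisation the quotient $2^{16(N-1)}G(N,k)/(2N^{2}\binom{2N}{N}^{2})$ carries only the prefactor $2^{4k-7}$, and the integrality in (i) is obtained from a single floor-function inequality (Lemma~\ref{lem:2-div-2-1}) valid uniformly for every modulus $m>1$; there is no case split between $p=2$ and odd primes, and no appeal to $s_{2}$. The recurrence you write for $Q(n)$ is correct but, as you implicitly concede, does not by itself yield $n\binom{2n}{n}\mid Q(n)$; it is not used in the paper. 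Until you actually produce $G$, handle the boundary term $F(N-1,N-1)$, and prove the requisite Legendre-type inequality, the argument remains a program rather than a proof.
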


The present paper is organized as follows. We recall the definition of WZ-pair and also make some preliminaries in Section \ref{sec-2}. In section \ref{sec-3}, we give the proofs of Theorem \ref{my-theorem-1} and Theorem \ref{my-theorem-2}.

\section{Preliminaries}\label{sec-2}
We recall that a function $A(n,k)$ is hypergeometric in its two variables if the quotients
\begin{equation*}
\frac{A(n+1,k)}{A(n,k)} \text{~and~} \frac{A(n,k+1)}{A(n,k)}
\end{equation*}
are rational functions in $n$ and $k$, respectively.
We say that a pair of hypergeometric functions  $F(n,k)$ and $G(n,k)$ form a WZ-pair if $F$ and $G$ satisfy the following relation:
\begin{align}\label{wzpair}
F(n,k-1)-F(n,k)=G(n+1,k)-G(n,k).
\end{align}
Wilf and Zeilberger \cite{wz} proved in this case that there exists a rational function $C(n,k)$ such that
$$F(n,k)=C(n,k)G(n,k).$$
The rational function $C(n,k)$ is also called the certificate of the pair $(F,G)$. One can use a Maple package written by Zeilberger to discover or verify WZ-pairs, see EKHAD \cite[Appendix A]{a=b}.

To prove our results, we need the following lemma, which is due to He.
\begin{lem}[{\cite[Lemma 2.3]{he1}}]\label{my-lemma-1}
Let $n\geq 0, k\geq 1 $ and $N\geq 1$ be integers and $P(N)$  a positive integer depending on $N$. Assume that  $F(n,k)$ and $G(n,k)$ are two rational functions in $n,k$ such that $F(n,k)=G(n,k)=0$ for $k>n$ and they form a WZ-pair.
If we have
\begin{itemize}
\item[(i)]
$P(N)\biggm|B^N \sum_{k=1}^{N-1}G(N,k)
$ for all integers  $k\geq 0$;
and
\item[(ii)] $P(N)\Bigm|B^NF(N-1,N-1),$
\end{itemize}
where $B$ is an integer such that $B^NF(N-1,N-1)$ and $B^N G(N,k)$ are both integers.
Then we have
\begin{equation*}
P(N)\Bigm|B^N\sum_{n=0}^{N-1}F(n,0).
\end{equation*}
\end{lem}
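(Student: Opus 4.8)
The plan is to prove the lemma by a standard WZ telescoping argument, which reduces the single sum $\sum_{n=0}^{N-1}F(n,0)$ to the exact combination of quantities appearing in hypotheses (i) and (ii). First I would fix $N$, abbreviate $S_k := \sum_{n=0}^{N-1}F(n,k)$, and sum the defining WZ relation \eqref{wzpair} over $n=0,1,\dots,N-1$. The right-hand side telescopes in $n$, leaving $G(N,k)-G(0,k)$, so that
\[
S_{k-1}-S_k = G(N,k)-G(0,k).
\]

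Next I would telescope this identity in $k$. Summing over $k=1,\dots,N$ collapses the left side to $S_0-S_N$, giving
\[
S_0 = S_N + \sum_{k=1}^{N}\bigl(G(N,k)-G(0,k)\bigr).
\]
Here the vanishing hypotheses do the bookkeeping: $S_N=0$ because $F(n,N)=0$ for every $n\le N-1$ (each such term has $k>n$), and $\sum_{k=1}^N G(0,k)=0$ because $G(0,k)=0$ for all $k\ge 1$. Hence $S_0=\sum_{k=1}^{N}G(N,k)$, and the whole single sum is expressed through the top row of $G$.

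The one step I expect to need the most care is the diagonal term $G(N,N)$, which lies outside the range of the sum in hypothesis (i). I would isolate it by evaluating \eqref{wzpair} at $(n,k)=(N-1,N)$: since $F(N-1,N)=0$ and $G(N-1,N)=0$ (both have $k>n$), the relation collapses to $G(N,N)=F(N-1,N-1)$. Therefore
\[
S_0 = \sum_{k=1}^{N-1}G(N,k) + F(N-1,N-1).
\]
Finally I would multiply through by $B^N$ and apply the two hypotheses directly: (i) gives $P(N)\mid B^N\sum_{k=1}^{N-1}G(N,k)$ and (ii) gives $P(N)\mid B^N F(N-1,N-1)$, whence $P(N)\mid B^N S_0=B^N\sum_{n=0}^{N-1}F(n,0)$, as claimed. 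The integrality proviso on $B$ is exactly what guarantees that every quantity in the argument is an integer, so that the divisibilities are meaningful. The argument is essentially routine telescoping; the only genuinely delicate points are the correct handling of the boundary contributions at $k=0$, at $k=N$, and at the diagonal $G(N,N)=F(N-1,N-1)$.
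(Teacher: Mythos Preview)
Your argument is correct and is exactly the standard WZ telescoping proof one would expect: sum \eqref{wzpair} over $n$, telescope in $k$, and use the vanishing hypothesis $F(n,k)=G(n,k)=0$ for $k>n$ to kill the boundary terms, with the diagonal evaluation $G(N,N)=F(N-1,N-1)$ recovering precisely the quantity in hypothesis~(ii). Note that the present paper does not actually supply a proof of this lemma; it is quoted verbatim from \cite[Lemma~2.3]{he1}, so there is no in-paper argument to compare against, but your proof is the natural one and matches how such lemmas are established in the WZ literature.
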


Let $p$ be a prime. The $p$-adic evaluation of an integer $m$ is defined by
$$v_p(m)=sup\{a\in \mathbb{N}:p^a|m\}.$$
 Clearly, for a rational number $x=m/n$ with $m,n\in \mathbb{Z}$, we  can define
  \begin{equation}\label{p-adic-ratio}v_p(x)=v_p(m)-v_p(n).
  \end{equation}
   For the $p$-adic evaluation of $n!$, it is well known that
 \begin{equation}\label{p-adic-factorial}
 v_p(n!)=\sum_{i=1}^\infty\left\lfloor\frac{n}{p^i}\right\rfloor,
 \end{equation}
 where $\lfloor x\rfloor$ is the greatest integer not exceeding $x$.

We also need the following lemmas before giving our proofs. Let $\mathbb{N}$ and $\mathbb{Z}$ represents, respectively, the set of nonnegative integers and integers throughout the paper.
 \begin{lem}[{\cite[Lemma 2.2]{he1}}]\label{lem:div6-3} Let $n$ be a positive integer and $k\leq n$ a nonnegative integer. Then
\begin{equation*}
(2n+2k-1)\binom{2k}{k}\Biggm|n \binom{2 n}{n} \binom{ 2n+2k}{n+k}\binom{n+k}{2k}.
\end{equation*}
\end{lem}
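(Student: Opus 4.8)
The plan is to prove the divisibility one prime at a time, comparing $p$-adic valuations of the two sides by means of Legendre's formula \eqref{p-adic-factorial} and Kummer's theorem. Write $c_p(a,b)$ for the number of carries produced when $a$ and $b$ are added in base $p$, so that Kummer's theorem gives $v_p\binom{a+b}{a}=c_p(a,b)$. Since $v_p\binom{n+k}{2k}=c_p(2k,n-k)$ (the two parts $2k$ and $n-k$ add up to $n+k$), the asserted divisibility is equivalent to the inequality
\begin{equation*}
v_p(2n+2k-1)+c_p(k,k)\le v_p(n)+c_p(n,n)+c_p(n+k,n+k)+c_p(2k,n-k)
\end{equation*}
for every prime $p$. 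This reformulation is the whole point: it turns a divisibility among products of binomial coefficients into an additive statement about carries that can be attacked place by place.

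The factor $2n+2k-1$ is disposed of first. Being odd it contributes nothing at $p=2$, and for an odd prime $p$ with $d:=v_p(2n+2k-1)\ge 1$ I claim $c_p(n+k,n+k)\ge d$. Indeed $2(n+k)\equiv 1\pmod{p^{d}}$ forces $(n+k)\bmod p^{d}=(p^{d}+1)/2$, whose base-$p$ digits are $(p+1)/2$ in position $0$ and $(p-1)/2$ in positions $1,\dots,d-1$; doubling this number carries in each of the positions $0,\dots,d-1$, so there are at least $d$ carries. (Equivalently $\binom{2n+2k}{n+k}=2(2n+2k-1)\,C_{n+k-1}$ with $C_m=\frac1{m+1}\binom{2m}{m}$ a Catalan number, so $2n+2k-1$ already divides $\binom{2n+2k}{n+k}$.) Cancelling the left-hand $d$ against $c_p(n+k,n+k)$ reduces the whole problem to
\begin{equation*}
c_p(k,k)\le v_p(n)+c_p(n,n)+c_p(2k,n-k),
\end{equation*}
which is the core estimate.

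The core estimate is where the real work lies, and I would prove it through Legendre's formula applied digit by digit. The prime $2$ is actually immediate without the reduction: expanding the carries in base $2$, the term $c_2(n+k,n+k)=s_2(n+k)$ cancels the $-s_2(n+k)$ inside $c_2(2k,n-k)=s_2(k)+s_2(n-k)-s_2(n+k)$, and the original inequality collapses to $0\le v_2(n)+s_2(n)+s_2(n-k)$, where $s_2(m)$ is the number of ones in the binary expansion of $m$. For an odd prime the balance is delicate. Setting $q=p^{i}$ and writing $r=k\bmod q$, $s=n\bmod q$, the $i$-th summands in Legendre's formula combine to a net contribution $[\,s\ge q/2\,]+[\,r+s\ge q\,]+[\,r>s\,]-2\,[\,r\ge q/2\,]$, which is nonnegative in every case \emph{except} when $r\ge q/2>s$ and $r+s<q$, where there is a deficit of $1$. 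The main obstacle, and the step I expect to be hardest, is to show that the total deficit over all $i$ never exceeds the available compensation, namely the surplus carries contributed by the remaining digit places together with the term $v_p(n)$. This balancing is exactly what fails at $p=2$ (and must, since the estimate is false there) yet holds for every odd $p$, and carrying it out amounts to a careful but elementary accounting of the base-$p$ digits of $k$, $n$, and $n-k$.
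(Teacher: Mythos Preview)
The paper does not supply a proof of this lemma at all: it is quoted verbatim from He \cite[Lemma~2.2]{he1}, so there is no in-house argument to compare yours against. That said, your proposal has a genuine gap that would have to be closed before it counts as a proof.

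Your reductions are fine. The identity $\binom{2n+2k}{n+k}=2(2n+2k-1)C_{n+k-1}$ with $C_m$ the Catalan number does dispose of the factor $2n+2k-1$ outright, and your base-$2$ computation is correct: the original inequality at $p=2$ collapses to $0\le v_2(n)+s_2(n)+s_2(n-k)$. Your identification of the single deficit configuration $r\ge q/2>s$, $r+s<q$ at an odd prime is also accurate.

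The gap is exactly where you say the work lies. You reduce to the \emph{stronger} estimate $c_p(k,k)\le v_p(n)+c_p(n,n)+c_p(2k,n-k)$ (equivalently, $\binom{2k}{k}\mid n\binom{2n}{n}\binom{n+k}{2k}$) and then declare that bounding the accumulated deficit by the available surplus plus $v_p(n)$ ``amounts to a careful but elementary accounting of the base-$p$ digits of $k$, $n$, and $n-k$''. That accounting is the entire content of the lemma for odd $p$, and you have not carried it out: you have not exhibited, for each deficit level, a matching surplus level or a unit of $v_p(n)$, nor given any global inequality that forces the balance. Deficits can pile up at consecutive levels (e.g.\ $p=3$, $n=9$, $k=8$ has deficits at $i=1$ and $i=2$, exactly compensated by $v_3(9)=2$), so a one-line remark will not suffice; a genuine injective or inductive argument is needed. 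Until that step is written down, the proposal is a plan, not a proof.
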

 \begin{lem}\label{lem:div6-3'} Let $n\geq 2$ be a positive integer. Then
\begin{equation*}
64(2n+1)\biggm| n^2 (n+1) \binom{2 n}{n} \binom{2 n-2}{n-1} \binom{2 n+2}{n+1}.
\end{equation*}
\end{lem}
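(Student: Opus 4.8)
The plan is to set $P:=n^{2}(n+1)\binom{2n}{n}\binom{2n-2}{n-1}\binom{2n+2}{n+1}$ and, since $2n+1$ is odd so that $\gcd(64,2n+1)=1$, to prove the two divisibilities $2^{6}\mid P$ and $(2n+1)\mid P$ separately. The decisive first move is to collapse $P$ to a closed form. From the quotients of consecutive central binomial coefficients one obtains the elementary identities
\[
n\binom{2n}{n}=2(2n-1)\binom{2n-2}{n-1},\qquad (n+1)\binom{2n+2}{n+1}=2(2n+1)\binom{2n}{n},
\]
which let me replace $\binom{2n-2}{n-1}$ and $\binom{2n+2}{n+1}$ by scalar multiples of $\binom{2n}{n}$. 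After the factors $(n+1)$ and $2$ cancel, this yields
\[
P=\frac{n^{3}(2n+1)}{2n-1}\,\binom{2n}{n}^{3}.
\]

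Both divisibilities then fall out of this formula. For the odd factor, clearing the denominator gives $(2n-1)P=n^{3}(2n+1)\binom{2n}{n}^{3}$, whence $(2n+1)\mid(2n-1)P$; as $\gcd(2n+1,2n-1)=1$, I conclude $(2n+1)\mid P$ (note $P$ is an integer to begin with, so no integrality issue arises). For the power of two, I take $2$-adic valuations: since $2n\pm1$ are odd, the formula gives $v_{2}(P)=3\bigl(v_{2}(n)+v_{2}\binom{2n}{n}\bigr)$. By Legendre's formula \eqref{p-adic-factorial} one has $v_{2}\binom{2n}{n}=s_{2}(n)$, the number of ones in the binary expansion of $n$, so it remains only to check $v_{2}(n)+s_{2}(n)\ge2$ for every $n\ge2$. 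This is immediate: if $n$ is a power of two with $n\ge2$ then $v_{2}(n)\ge1$ and $s_{2}(n)=1$, and otherwise $s_{2}(n)\ge2$. Hence $v_{2}(P)\ge6$, i.e.\ $64\mid P$, and combining with $(2n+1)\mid P$ and coprimality gives $64(2n+1)\mid P$.

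Essentially all the work is concentrated in the opening identity: once $P=\frac{n^{3}(2n+1)}{2n-1}\binom{2n}{n}^{3}$ is in hand, the rest is bookkeeping. The one place demanding genuine care is the $2$-adic count for odd $n$, where $v_{2}(n)=0$ throws the entire burden onto $v_{2}\binom{2n}{n}$; this is exactly where the digit-sum identity $v_{2}\binom{2n}{n}=s_{2}(n)$ (equivalently Kummer's theorem, giving $s_{2}(n)\ge2$ for odd $n\ge3$) is indispensable. I therefore expect spotting and verifying the closed form to be the crux, and the parity edge case to be the only real subtlety in the routine part.
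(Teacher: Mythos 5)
Your proof is correct, but it runs along a genuinely different track from the paper's. The paper's entire proof is one factorial manipulation: it shows directly that
\[
\frac{n^{2}(n+1)\binom{2n}{n}\binom{2n-2}{n-1}\binom{2n+2}{n+1}}{64(2n+1)}=(2n-1)^{2}\binom{2n-3}{n-1}^{3},
\]
so the quotient is manifestly an integer and nothing further is needed --- no coprimality step, no $2$-adic valuation, no case analysis. Your closed form $P=\frac{n^{3}(2n+1)}{2n-1}\binom{2n}{n}^{3}$ is equivalent to the paper's: push it one step further using $\binom{2n}{n}=\frac{4(2n-1)}{n}\binom{2n-3}{n-1}$ (valid for $n\ge 2$, since $\binom{2n-2}{n-1}=2\binom{2n-3}{n-1}$), and you get $P=64(2n+1)(2n-1)^{2}\binom{2n-3}{n-1}^{3}$, at which point the divisor $64(2n+1)$ is visible to the naked eye. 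Because you stop at a form that still carries the denominator $2n-1$ and buries the power of $2$ inside $\binom{2n}{n}^{3}$, you are forced into two extra arguments: the $\gcd(2n+1,2n-1)=1$ step for the odd part, and Kummer's digit-sum identity $v_{2}\binom{2n}{n}=s_{2}(n)$ together with the check $v_{2}(n)+s_{2}(n)\ge 2$ for $n\ge 2$ for the even part. Both steps are sound, so the proof stands. What your route buys is sharper local information --- the identity $v_{2}(P)=3\bigl(v_{2}(n)+s_{2}(n)\bigr)$ shows exactly where the factor $64$ comes from and when it is tight (e.g.\ $n=2$); what the paper's route buys is brevity and the complete avoidance of prime-by-prime reasoning.
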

\begin{proof}
In fact, we note that
\begin{align*}
\frac{ n^2 (n+1) \binom{2 n}{n} \binom{2 n-2}{n-1} \binom{2 n+2}{n+1}}{64(2n+1)}&=\frac{(2n-1)^2(2n-3)!^3}{(n-2)!^3(n-1)!^3}\\
&=(2n-1)^2\binom{2n-3}{n-1}^3.
\end{align*}
So this follows the statement.
\end{proof}
\qed
\begin{lem}\label{lem:2-div-2-1} Let $m>1$  and $n, k$ be nonnegative integers. Then for $n\geq k$, we have
\begin{equation*}
\begin{split}
\left\lfloor\frac{4n+2k-2}{m}\right\rfloor+3\left\lfloor\frac{k}{m}\right\rfloor+\left\lfloor\frac{2n}{m}\right\rfloor&\geq 3\left\lfloor\frac{2k}{m}\right\rfloor+\left\lfloor\frac{n}{m}\right\rfloor+\left\lfloor\frac{n-1}{m}\right\rfloor+2\left\lfloor\frac{n-k}{m}\right\rfloor
\\
&\quad+\left\lfloor\frac{2n+k-1}{m}\right\rfloor.
\end{split}
\end{equation*}
\end{lem}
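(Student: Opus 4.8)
The plan is to strip off the integer parts and reduce everything to a comparison of fractional parts. I would first use the factorisation $4n+2k-2=2(2n+k-1)$, which pairs the first term on the left, $\lfloor 2(2n+k-1)/m\rfloor$, with the last term on the right, $\lfloor(2n+k-1)/m\rfloor$. The decisive preliminary observation is that the two sides carry the same linear mass: adding the numerators with their multiplicities gives $(4n+2k-2)+3k+2n=6n+5k-2$ on the left and $3(2k)+n+(n-1)+2(n-k)+(2n+k-1)=6n+5k-2$ on the right. Writing $\lfloor x\rfloor=x-\{x\}$, the common term $\tfrac{6n+5k-2}{m}$ cancels, so the claimed inequality is equivalent to
\begin{equation*}
\left\{\tfrac{4n+2k-2}{m}\right\}+3\left\{\tfrac{k}{m}\right\}+\left\{\tfrac{2n}{m}\right\}\le 3\left\{\tfrac{2k}{m}\right\}+\left\{\tfrac{n}{m}\right\}+\left\{\tfrac{n-1}{m}\right\}+2\left\{\tfrac{n-k}{m}\right\}+\left\{\tfrac{2n+k-1}{m}\right\}.
\end{equation*}

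Every fractional part above is periodic of period $m$ in each of $n$ and $k$, so the inequality depends only on the residues $a:=n\bmod m$ and $b:=k\bmod m$, and it suffices to treat $0\le a,b<m$. In this range the fractional parts are explicit: $\{n/m\}=a/m$, $\{k/m\}=b/m$, $\{(n-1)/m\}=(a-1)/m$ for $a\ge1$, while $\{(n-k)/m\}$ equals $(a-b)/m$ or $(a-b+m)/m$ according as $a\ge b$ or $a<b$; the doubled terms $\{2n/m\},\{2k/m\},\{(4n+2k-2)/m\},\{(2n+k-1)/m\}$ are read off modulo $m$ in the same way (the degenerate case $n=0$ being trivial).

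To handle the doubled arguments uniformly I would invoke Hermite's identity $\lfloor 2z\rfloor=\lfloor z\rfloor+\lfloor z+\tfrac12\rfloor$, i.e. $\{2z\}=2\{z\}-\varepsilon_z$ where $\varepsilon_z=1$ if $\{z\}\ge\tfrac12$ and $\varepsilon_z=0$ otherwise. Applying this with $z=n/m$, $z=k/m$ and $z=(2n+k-1)/m$ collapses the inequality to a relation among $\{n/m\}$, $\{k/m\}$, $\{(2n+k-1)/m\}$, the indicators $\varepsilon_x,\varepsilon_y,\varepsilon_u$, and the carry $q:=\lfloor(2n+k-1)/m\rfloor$. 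For instance, in the regime $a\ge b\ge1$ the whole inequality boils down to the clean statement $\varepsilon_u+\varepsilon_x+q\ge 3\varepsilon_y$, and the complementary regime $a<b$ produces an analogous relation in which the extra unit from $\{(n-k)/m\}=(a-b+m)/m$ enters.

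I expect the main obstacle to be the verification of these reduced relations at the tight configurations, where the inequality has essentially no slack. These occur exactly when the doubling thresholds are met simultaneously, for example when $a$ and $b$ are both close to $m/2$: then the three copies of $\lfloor 2k/m\rfloor$ on the right all jump up at once and must be paid for by the carry $q$ together with the indicators $\varepsilon_u,\varepsilon_x$ coming from $\lfloor(4n+2k-2)/m\rfloor$ and $\lfloor 2n/m\rfloor$ on the left. Showing that the carry is always large enough to absorb these jumps—in particular controlling $\varepsilon_u$ when $q$ takes its smallest possible value—is the delicate heart of the argument and is where I would concentrate the effort; once these extremal cases are settled, the generic cases (some fractional part bounded away from $\tfrac12$) follow from the trivial bounds $0\le\{z\}<1$.
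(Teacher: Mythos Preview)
Your reduction is correct and in fact sharper than the paper's: passing to fractional parts, restricting to residues $a=n\bmod m$, $b=k\bmod m$, and applying $\{2z\}=2\{z\}-\varepsilon_z$ does collapse the inequality (in the regime $a\ge b\ge 1$) to exactly
\[
\varepsilon_u+\varepsilon_x+q\ \ge\ 3\varepsilon_y,
\]
where $q=\lfloor(2a+b-1)/m\rfloor$. The problem is that this reduced inequality is \emph{false} at the very configuration you flag as ``tight''. Take $m=4$ and $a=b=2$ (so $n=k=2$). Then $\varepsilon_x=\varepsilon_y=1$, $2a+b-1=5$ gives $q=1$ and $u=1/4$, hence $\varepsilon_u=0$, and the inequality reads $0+1+1\ge 3$, which fails. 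Equivalently, in the original statement with $n=k=2$, $m=4$ one computes
\[
\left\lfloor\tfrac{10}{4}\right\rfloor+3\left\lfloor\tfrac{2}{4}\right\rfloor+\left\lfloor\tfrac{4}{4}\right\rfloor=3
\quad<\quad
3\left\lfloor\tfrac{4}{4}\right\rfloor+\left\lfloor\tfrac{2}{4}\right\rfloor+\left\lfloor\tfrac{1}{4}\right\rfloor+2\left\lfloor\tfrac{0}{4}\right\rfloor+\left\lfloor\tfrac{5}{4}\right\rfloor=4.
\]
So the lemma as stated is simply not true, and no amount of effort on the ``delicate heart'' will close the gap.

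The paper's own proof overlooks precisely this point: its Case~3 (where $k/m\ge\tfrac12$) is split into the subcases $3m/2\le 2n+k-1<2m$ and $2m\le 2n+k-1<3m$, but when $n=k=m/2$ one has $2n+k-1=3m/2-1<3m/2$, which lies in neither subrange. That missing sliver is exactly where the counterexample sits. The downstream application (the divisibility $\binom{2k}{k}^2\mid\binom{2n}{n}\binom{n}{k}\binom{k+n}{2k}\binom{k+2n-1}{n-1}\binom{2k+4n-2}{k+2n-1}$) may well survive, since it only needs the \emph{sum} $\sum_{i\ge1}A_{p^i}$ to be nonnegative rather than each $A_{p^i}$ individually; indeed for $n=k=2$ and $p=2$ the terms are $+1,-1,+1,0,\ldots$, summing to $1\ge0$. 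But the termwise inequality you are asked to prove is false, and your plan---however cleanly organised---cannot succeed without first correcting the statement.
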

\begin{proof}
Let
\begin{align*}
S&=\left\lfloor\frac{4n+2k-2}{m}\right\rfloor+3\left\lfloor\frac{k}{m}\right\rfloor+\left\lfloor\frac{2n}{m}\right\rfloor- 3\left\lfloor\frac{2k}{m}\right\rfloor-\left\lfloor\frac{n}{m}\right\rfloor-\left\lfloor\frac{n-1}{m}\right\rfloor
\\
&\quad-2\left\lfloor\frac{n-k}{m}\right\rfloor-\left\lfloor\frac{2n+k-1}{m}\right\rfloor.
\end{align*}
Then
\begin{align*}
S&=3\left\{\frac{2k}{m}\right\}+\left\{\frac{n}{m}\right\}+\left\{\frac{n-1}{m}\right\}+2\left\{\frac{n-k}{m}\right\}
+\left\{\frac{2n+k-1}{m}\right\}-\left\{\frac{4n+2k-2}{m}\right\}\\
&\quad-3\left\{\frac{k}{m}\right\}
-\left\{\frac{2n}{m}\right\},
\end{align*}
 since $(6k)+(n-1)+n+2(n-k)+(2n+k-1)-(4n+2k-2)-3k-(2n)=0$. To prove $S\geq 0$ is equivalent to show
\begin{equation}\label{ineq-2}
\begin{split}
\left\{\frac{4n+2k-2}{m}\right\}+3\left\{\frac{k}{m}\right\}+\left\{\frac{2n}{m}\right\}&\leq 3\left\{\frac{2k}{m}\right\}+\left\{\frac{n}{m}\right\}+\left\{\frac{n-1}{m}\right\}\\
&\quad+2\left\{\frac{n-k}{m}\right\}
+\left\{\frac{2n+k-1}{m}\right\}
\end{split}
\end{equation}
Note that this only depends on $n$ and $k$ modulo $m$. Without loss of generality, we may assume that $n,k\in\{0,1,\ldots,m-1\}$.
So
$$0\leq \frac{k}{m}<1, 0\leq \frac{n}{m}<1,  \frac{n-1}{m}<1 \text{~and~}-1<\frac{n-k}{m}<1. $$ We can simplify the inequality \eqref{ineq-2} as follows:
\begin{align}\label{ineq-2'}
3\left\{\frac{2k}{m}\right\}+\frac{4n-5k-1}{m}
+\left\{\frac{2n+k-1}{m}\right\}
\geq\left\{\frac{4n+2k-2}{m}\right\}
+\left\{\frac{2n}{m}\right\}.
\end{align}

It is not difficult to see that the inequality \eqref{ineq-2'} holds for $k=0$.

Let us consider the following cases and note that $n\geq k$:
\begin{itemize}
\item[1)]$\frac{n}{m}<\frac{1}{2}$,
\begin{itemize}
\item[a)]if $2n+k-1<m$, in this case the inequality \eqref{ineq-2'} reduces to
\begin{align*}
\frac{6k}{m}+\frac{4n-5k-1}{m}
+\frac{2n+k-1}{m}-\frac{2n}{m}
\geq\left\{\frac{4n+2k-2}{m}\right\}
,
\end{align*}
which holds trivially since
$$\frac{4n+2k-2}{m}\geq\left\{\frac{4n+2k-2}{m}\right\}.$$

\item[b)]if $m\leq 2n+k-1<2m$, in this case the inequality \eqref{ineq-2'} reduces to
\begin{align*}
\frac{6k}{m}+\frac{4n-5k-1}{m}
+\frac{2n+k-1}{m}-1
\geq\left\{\frac{4n+2k-2}{m}\right\}
+\frac{2n}{m},
\end{align*}
which holds  since $(4n+2k-2)/m\geq 2$ and thereby
$$\frac{4n+2k-2}{m}-1\geq1> \left\{\frac{4n+2k-2}{m}\right\}.$$
\end{itemize}
\item[2)]$\frac{n}{m}\geq \frac{1}{2}$ and $ \frac{k}{m}<\frac{1}{2}$.
\begin{itemize}
\item[a)]if $m\leq 2n+k-1<2m$, then the inequality \eqref{ineq-2'} reduces to
\begin{align*}
\frac{6k}{m}+\frac{4n-5k-1}{m}
+\frac{2n+k-1}{m}-1
\geq\left\{\frac{4n+2k-2}{m}\right\}
+\frac{2n}{m}-1,
\end{align*}
which is equivalent to the trivial inequality
$$\frac{4n+2k-2}{m}\geq\left\{\frac{4n+2k-2}{m}\right\}.$$
\item[b)]if $2m\leq 2n+k-1<\frac{5m}{2}$, then in this case the inequality \eqref{ineq-2'} reduces to
\begin{align*}
\frac{6k}{m}+\frac{4n-5k-1}{m}
+\frac{2n+k-1}{m}-2
\geq\left\{\frac{4n+2k-2}{m}\right\}
+\frac{2n}{m}-1,
\end{align*}
which is equivalent to the trivial inequality
$$\frac{4n+2k-2}{m}-1\geq\left\{\frac{4n+2k-2}{m}\right\},$$
which holds clearly since
$$\left\{\frac{4n+2k-2}{m}\right\}=\frac{4n+2k-2}{m}-2.$$
\end{itemize}
\item[(3)]$\frac{k}{m}\geq \frac{1}{2}$,
\begin{itemize}
\item[a)] if $\frac{3m}{2}\leq 2n+k-1<2m$, then in this case the inequality \eqref{ineq-2'} reduces to
\begin{align*}
\frac{6k}{m}-3+\frac{4n-5k-1}{m}
+\frac{2n+k-1}{m}-1
\geq\left\{\frac{4n+2k-2}{m}\right\}
+\frac{2n}{m}-1,
\end{align*}
which is equivalent to the following inequality
\begin{equation}\label{div-2-ineq}
\frac{4n+2k-2}{m}-3\geq\left\{\frac{4n+2k-2}{m}\right\}.
\end{equation}
The inequality \eqref{div-2-ineq} follows from
$$\left\{\frac{4n+2k-2}{m}\right\}=\frac{4n+2k-2}{m}-3.$$
\item[b)]if $2m\leq 2n+k-1<3m$, then the inequality \eqref{ineq-2'} reduces to
\begin{align*}
\frac{6k}{m}-3+\frac{4n-5k-1}{m}
+\frac{2n+k-1}{m}-2
\geq\left\{\frac{4n+2k-2}{m}\right\}
+\frac{2n}{m}-1,
\end{align*}
which is equivalent to the following inequality
\begin{equation}\label{div-2-ineq'}
\frac{4n+2k-2}{m}-4\geq\left\{\frac{4n+2k-2}{m}\right\}.
\end{equation}
The inequality \eqref{div-2-ineq'} follows from
\begin{align*}
\left\{\frac{4n+2k-2}{m}\right\}=
\begin{cases}
 \frac{4n+2k-2}{m}-4,
& \text{if}~2m\leq 2n+k-1<\frac{5m}{2};\\
 \frac{4n+2k-2}{m}-5,
&  \text{if}~\frac{5m}{2}\leq 2n+k-1<3m.
\end{cases}
\end{align*}
\end{itemize}
\end{itemize}

This completes the proof.
\end{proof}
\qed

\begin{lem}\label{lem:2-div_2-2} Let $n$ be a positive integer and $k$ a nonnegative integer. Then
\begin{equation*}
{\binom{2 k}{k}^2}\Biggm|\binom{2 n}{n} \binom{n}{k} \binom{k+n}{2 k} \binom{k+2 n-1}{n-1} \binom{2 k+4n-2}{k+2 n-1}.
\end{equation*}
\end{lem}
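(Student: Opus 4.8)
The plan is to prove the divisibility primewise, by comparing $p$-adic valuations on both sides. Since $\binom{2k}{k}^2$ and the product on the right are both integers, it suffices to show $v_p\!\big(\binom{2k}{k}^2\big)\le v_p(\mathrm{RHS})$ for every prime $p$. First I would dispose of the trivial case: if $k>n$ then $\binom{n}{k}=0$, so the right-hand side vanishes and is divisible by anything; hence we may assume $0\le k\le n$, which in particular gives $n\ge k$ so that Lemma \ref{lem:2-div-2-1} is applicable.

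Next, using Legendre's formula \eqref{p-adic-factorial} I would expand every binomial coefficient appearing in the statement as an alternating sum of $v_p$ of factorials, and collect the contribution coming from each power $m=p^i$. Writing $v_p\binom{a}{b}=\sum_{i\ge1}\big(\lfloor a/p^i\rfloor-\lfloor b/p^i\rfloor-\lfloor (a-b)/p^i\rfloor\big)$, the desired inequality $v_p(\mathrm{LHS})\le v_p(\mathrm{RHS})$ becomes a sum over $i\ge1$ of per-level inequalities, one for each $m=p^i>1$.

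The key step is to carry out the bookkeeping for a fixed $m=p^i$. After writing the six binomial coefficients ($\binom{2k}{k}^2$ on the left; $\binom{2n}{n}$, $\binom{n}{k}$, $\binom{k+n}{2k}$, $\binom{k+2n-1}{n-1}$, $\binom{2k+4n-2}{k+2n-1}$ on the right) in terms of floors, several terms cancel — notably the two occurrences of $\lfloor (n+k)/m\rfloor$, one coming from $\binom{k+n}{2k}$ and one from $\binom{k+2n-1}{n-1}$ — and after collecting coefficients the per-level inequality reduces exactly to
$$\left\lfloor\frac{4n+2k-2}{m}\right\rfloor+3\left\lfloor\frac{k}{m}\right\rfloor+\left\lfloor\frac{2n}{m}\right\rfloor\ge 3\left\lfloor\frac{2k}{m}\right\rfloor+\left\lfloor\frac{n}{m}\right\rfloor+\left\lfloor\frac{n-1}{m}\right\rfloor+2\left\lfloor\frac{n-k}{m}\right\rfloor+\left\lfloor\frac{2n+k-1}{m}\right\rfloor,$$
which is precisely the statement of Lemma \ref{lem:2-div-2-1}. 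Summing these inequalities over all $i\ge1$ yields $v_p(\mathrm{LHS})\le v_p(\mathrm{RHS})$, and since $p$ was arbitrary the lemma follows.

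I do not expect a genuine obstacle here: Lemma \ref{lem:2-div-2-1} was evidently tailored to this very computation, so once the reduction is set up the whole difficulty is concentrated in the (already-established) floor inequality. The only thing requiring care is the algebraic bookkeeping in the cancellation step — getting the coefficients of $\lfloor k/m\rfloor$, $\lfloor 2k/m\rfloor$, and $\lfloor (n-k)/m\rfloor$ right, and confirming that the $\lfloor (n+k)/m\rfloor$ terms cancel — so that the residual inequality matches Lemma \ref{lem:2-div-2-1} exactly.
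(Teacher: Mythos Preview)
Your proposal is correct and follows essentially the same route as the paper: express the ratio $W(n,k)=\mathrm{RHS}/\binom{2k}{k}^2$ as $\frac{k!^3(2n)!(2k+4n-2)!}{(2k)!^3 n!(n-1)!(n-k)!^2(k+2n-1)!}$, apply Legendre's formula \eqref{p-adic-factorial}, and observe that the resulting per-level inequality is exactly Lemma~\ref{lem:2-div-2-1}. Your explicit disposal of the case $k>n$ is a small clarification the paper leaves implicit, but otherwise the arguments coincide.
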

\begin{proof}
In view of \eqref{p-adic-factorial},
since
\begin{align*}
W(n,k)=\frac{\binom{2 n}{n} \binom{n}{k} \binom{k+n}{2 k} \binom{k+2 n}{n} \binom{2 k+4 n}{k+2 n}}{\binom{2 k}{k}^2}=\frac{k!^3 (2 n)! (2 k+4 n-2)!}{(2 k)!^3 n!(n-1)! (n-k)!^2 (k+2 n-1)!},
\end{align*}
it suffices to show that for any prime $p$, $v_p(W(n,k))=\sum_{i=1}^\infty A_{p^i}\geq 0$ where
\begin{align*}A_{p^i}&=\left\lfloor\frac{4n+2k-2}{p^i}\right\rfloor+3\left\lfloor\frac{k}{p^i}\right\rfloor+\left\lfloor\frac{2n}{p^i}\right\rfloor- \left(3\left\lfloor\frac{2k}{p^i}\right\rfloor+\left\lfloor\frac{n}{p^i}\right\rfloor+\left\lfloor\frac{n-1}{p^i}\right\rfloor+2\left\lfloor\frac{n-k}{p^i}\right\rfloor
\right.\\
&\quad\left.+\left\lfloor\frac{2n+k-1}{m}\right\rfloor\right). \end{align*}
By Lemma \ref{lem:2-div-2-1}, it follows that $A_{p^i}\geq 0$ and thereby $v_p(W(n,k))\geq 0$. Hence, $W(n,k)\in \mathbb{Z}$, which proves
Lemma \ref{lem:2-div_2-2}.
\end{proof}
\qed

\begin{lem}\label{lem:1-div}
 For positive integer $n\geq1$, we have
\begin{align*}
((2 n-1)! (2n-2)!) (3 n-3)!\bigm|(6 n-5)! (n-1)!.
\end{align*}
\end{lem}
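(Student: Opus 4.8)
The plan is to establish the divisibility through $p$-adic valuations, in the spirit of Lemmas \ref{lem:2-div-2-1} and \ref{lem:2-div_2-2}. Since both sides are positive integers for every $n\ge 1$, the assertion $((2n-1)!(2n-2)!)(3n-3)!\mid (6n-5)!(n-1)!$ is equivalent to $v_p\big((2n-1)!(2n-2)!(3n-3)!\big)\le v_p\big((6n-5)!(n-1)!\big)$ for every prime $p$. By Legendre's formula \eqref{p-adic-factorial} it therefore suffices to prove, for every integer $m\ge 2$ (to be applied with $m=p^i$), that
\[
\left\lfloor\frac{2n-1}{m}\right\rfloor+\left\lfloor\frac{2n-2}{m}\right\rfloor+\left\lfloor\frac{3n-3}{m}\right\rfloor\le\left\lfloor\frac{6n-5}{m}\right\rfloor+\left\lfloor\frac{n-1}{m}\right\rfloor,
\]
and then to sum these over $i\ge 1$.

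Next I would exploit that all five numerators are affine in $n-1$: indeed $2n-2=2(n-1)$, $3n-3=3(n-1)$, $2n-1=2(n-1)+1$ and $6n-5=6(n-1)+1$. Writing $n-1=qm+r$ with $0\le r\le m-1$, every floor peels off an integer multiple of $q$, and these contributions cancel (they total $7q$ on each side). Hence the displayed inequality collapses to the single–variable statement that I abbreviate $(\dagger)$:
\begin{equation*}
\left\lfloor\frac{6r+1}{m}\right\rfloor\ge\left\lfloor\frac{2r}{m}\right\rfloor+\left\lfloor\frac{2r+1}{m}\right\rfloor+\left\lfloor\frac{3r}{m}\right\rfloor,\qquad 0\le r\le m-1. \tag{$\dagger$}
\end{equation*}
This reduction is routine; the whole content of the lemma is concentrated in $(\dagger)$.

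To prove $(\dagger)$, set $a=\lfloor 2r/m\rfloor$, $b=\lfloor 3r/m\rfloor$, $c=\lfloor(2r+1)/m\rfloor$ and $N=\lfloor(6r+1)/m\rfloor$, and I would produce two independent lower bounds for $N$. First, monotonicity of the floor together with $\lfloor 2t\rfloor\ge 2\lfloor t\rfloor$ applied to $t=3r/m$ gives $N\ge\lfloor 6r/m\rfloor\ge 2b$. Second, superadditivity of the floor, applied to the splittings $6r+1=(4r+1)+2r$ and then $4r+1=(2r+1)+2r$, gives $N\ge\lfloor(4r+1)/m\rfloor+a\ge (c+a)+a=2a+c$. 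Finally $0\le r\le m-1$ forces $2r+1<2m$, so $c\le 1$. The desired bound $N\ge a+b+c$ now drops out by a one–line integer argument: if $a\ge b$ then $N\ge 2a+c\ge a+b+c$; and if $a<b$ then $b\ge a+1\ge a+c$ (using $c\le 1$), whence $a+b+c\le 2b\le N$.

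The main obstacle I anticipate is locating the two ``right'' additive decompositions of $6r+1$ that yield precisely $N\ge 2b$ and $N\ge 2a+c$; once these are in hand the combinatorial step is immediate, and, pleasantly, no case analysis on the position of $r$ relative to the breakpoints $m/6,\dots,5m/6$ is required. Should one prefer to avoid the superadditivity device, the same inequality $(\dagger)$ can instead be verified directly by splitting into the cases $\lfloor 2r/m\rfloor\in\{0,1\}$ and $\lfloor 3r/m\rfloor\in\{0,1,2\}$, exactly in the manner of the proof of Lemma \ref{lem:2-div-2-1}; that route is elementary but messier, as it forces one to track the boundary instances such as $2r+1=m$.
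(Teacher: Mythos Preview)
Your argument is correct. Both you and the paper begin identically: reduce to the Legendre/Kummer inequality
\[
\left\lfloor\frac{6n-5}{m}\right\rfloor+\left\lfloor\frac{n-1}{m}\right\rfloor\ge\left\lfloor\frac{2n-1}{m}\right\rfloor+\left\lfloor\frac{2n-2}{m}\right\rfloor+\left\lfloor\frac{3n-3}{m}\right\rfloor
\]
for all $m\ge2$, then use periodicity in $n$ modulo $m$ to restrict to a single period. From that point the two proofs diverge. The paper passes to fractional parts and carries out a case analysis in the style of Lemma~\ref{lem:2-div-2-1}, splitting according to the size of $3n-3$ and $2n-2$ relative to $m,2m$ and checking the reduced inequality in each subcase. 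Your route is genuinely different: after writing $n-1=qm+r$ you extract two lower bounds $N\ge 2b$ (via $\lfloor 2t\rfloor\ge 2\lfloor t\rfloor$ with $t=3r/m$) and $N\ge 2a+c$ (via two applications of $\lfloor x+y\rfloor\ge\lfloor x\rfloor+\lfloor y\rfloor$), and then a short integer dichotomy on whether $a\ge b$ or $a<b$ (using $c\le1$) finishes the job. This buys you a case-free proof of the core inequality, whereas the paper's case split, though elementary, requires tracking several boundary configurations. Conversely, the paper's method is mechanical and parallels the earlier lemmas, so it requires no ingenuity in finding the ``right'' decompositions of $6r+1$.
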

\begin{proof}
Along the same line of the proof of Lemma
\ref{lem:2-div_2-2}, it suffices to show that for any $m\geq 2$,
\begin{align}\label{inequality-lem-1}
\left\lfloor\frac{6n-5}{m}\right\rfloor+\left\lfloor\frac{n-1}{m}\right\rfloor\geq \left\lfloor\frac{2n-1}{m}\right\rfloor
+\left\lfloor\frac{2n-2}{m}\right\rfloor+\left\lfloor\frac{3n-3}{m}\right\rfloor,
\end{align}
which is equivalent to the following inequality
\begin{align}\label{inequality-lem}
\left\{\frac{6n-5}{m}\right\}+\left\{\frac{n-1}{m}\right\}\leq \left\{\frac{2n-1}{m}\right\}
+\left\{\frac{2n-2}{m}\right\}+\left\{\frac{3n-3}{m}\right\}
\end{align}
since $(6n-5)+(n-1)-(2n-1)-(2n-2)-(3n-3)=0$.

Note that $n\geq 1$, we may assume that $n\in\{1,\ldots,m-1,m\}$ since it only depends on $m$. To begin with, it is easy to verify that the inequality \eqref{inequality-lem}
holds for $n=1$ and $n=m$. In the following, we assume that $2\leq n\leq m-1.$ So we have $3n-3\geq 2n-1>2n-2$.  To prove \eqref{inequality-lem}, it is sufficient to consider
the following cases.
\begin{itemize}
\item[1)]$3n-3<m$. In this case, the inequality \eqref{inequality-lem} reduces to
\begin{align*}
\left\{\frac{6n-5}{m}\right\}+\frac{n-1}{m}\leq \frac{2n-1}{m}
+\frac{2n-2}{m}+\frac{3n-3}{m},
\end{align*}
i.e.,
\begin{align*}
\left\{\frac{6n-5}{m}\right\}\leq \frac{6n-5}{m},
\end{align*}
which is clearly true.
\item[2)]$m\leq 3n-3\leq 2m-1$,
\begin{itemize}
\item[a)]if $2n-1< m$, then $2m+1\leq 6n-5\leq 4m-1$. So
the inequality \eqref{inequality-lem} reduces to
\begin{align*}
\left\{\frac{6n-5}{m}\right\}\leq \frac{6n-5}{m}-1,
\end{align*}
which is valid since
$2<2+\frac{1}{m}\leq \frac{6n-5}{m}\leq 4-\frac{1}{m}<4.$
\item[b)]if $m\leq 2n-2<2 m$, then $3m+1\leq 6n-5< 6m+1$. So
the inequality \eqref{inequality-lem} reduces to
\begin{align*}
\left\{\frac{6n-5}{m}\right\}\leq \frac{6n-5}{m}-3,
\end{align*}
which is valid since
$3<3+\frac{1}{m}\leq \frac{6n-5}{m}< 6+\frac{1}{m}<6.$

\item[c)] if $2n-2=m-1$, then $n=(m+1)/2$. So the inequality \eqref{inequality-lem} reduces to
\begin{align*}
\left\{\frac{3m-2}{m}\right\}+\frac{m-1}{2m}\leq
\frac{m-1}{m}+\frac{3m-3}{2m},
\end{align*}
i.e.,
\begin{align*}
\left\{\frac{3m-2}{m}\right\}\leq
\frac{3m-2}{m}-1.
\end{align*}
It holds trivially since $2\leq \frac{3m-2}{m}=3-\frac{2}{m}<3$.
\end{itemize}
\item[3)]$2m\leq 3n-3\leq 3m-6$. In this case, we have $m<4m/3\leq 2n-2\leq 2m-4$ and $4m+1\leq 6n-5\leq 6m-11$. So
the inequality \eqref{inequality-lem} reduces to
\begin{align*}
\left\{\frac{6n-5}{m}\right\}\leq \frac{6n-5}{m}-4,
\end{align*}
which is also true since
$4<4+\frac{1}{m}\leq \frac{6n-5}{m}\leq 6-\frac{11}{m}<6.$
\end{itemize}
On the basis of above discussion, it follows the inequality \eqref{inequality-lem} and thus the inequality \eqref{inequality-lem-1}
is true. Therefore, by invoking \eqref{p-adic-ratio} and \eqref{p-adic-factorial}, we can obtain our statement.

This completes the proof.
\end{proof}
\qed

Now we are ready to prove our main results.
\section{Proofs of Theorem \ref{my-theorem-1} and  Theorem \ref{my-theorem-2}}\label{sec-3}
\noindent
 \emph{Proof of Theorem \ref{my-theorem-1}.}
  For $n,k\in \mathbb{N}$, let
\begin{align*}
F(n,k)=\frac{ (-1)^{n+k} \left(20 n^2-12 k n-2 k+8 n+1\right)\binom{2 n}{n}^3 \binom{2 k+2 n}{k+n} \binom{2 n-2 k}{n-k} \binom{k+n}{n-k}}{\binom{2 k}{k} 4^{6 n-2 k}}
\end{align*}
and
\begin{align*}
G(n,k)=\frac{(-1)^{n+k}2 n^3 \binom{2 n}{n}^3 \binom{2 k+2 n}{k+n} \binom{2 n-2 k}{n-k} \binom{k+n}{n-k}}{\binom{2 k}{k} (2 k+2 n-1) 16^{-k+3 n-1}}.
\end{align*}
The numbers $F(n,k)$ can be found in \cite{he1} and \cite{zudilin} and the numbers $G(n,k)$ can be obtained by the Gosper algorithm for
$F(n,k-1)-F(n,k)$; see \cite{gosper}. One can easily deduce that the pair $(F,G)$ forms a WZ-pair. They satisfy the relation
\eqref{wzpair}.

By Lemma \ref{lem:div6-3'}, for $k=1$ and $N\geq 2$, we have
\begin{align*}
\frac{(-4096)^{N-1} G(N,1)}{2N^2\binom{2N}{N}^2}=\frac{ N^2 (N+1) \binom{2 N}{N} \binom{2 N-2}{N-1} \binom{2 N+2}{N+1}}{64 (2 N+1)}\in \mathbb{Z}.
\end{align*}

 In view of Lemma \ref{lem:div6-3}, we obtain that for $N\geq k\geq 2$,
\begin{align*}
\frac{(-4096)^{N-1} G(N,k)}{2N^2\binom{2N}{N}^2}=\frac{2^{4 k-8} N \binom{2 N}{N} (-1)^{k+1} \binom{k+N}{N-k} \binom{2 N-2 k}{N-k} \binom{2 k+2 N}{k+N}}{\binom{2 k}{k} (2 k+2 N-1)}\in \mathbb{Z}.
\end{align*}
Hence it follows that
\begin{align*}
2N^2\binom{2N}{N}^2\biggm|(-4096)^{N-1} \sum_{k=1}^{N-1}G(N,k).
\end{align*}

Furthermore, for $N\geq 2$, we have
\begin{align*}
\frac{(-4096)^{N-1} F(N-1,N-1)}{2N^2\binom{2N}{N}^2}&=\frac{(-1)^{N+1} 2^{4 N-5} \left(8 N^2-10 N+3\right) \binom{2 N-2}{N-1}^2 \binom{4 N-4}{2 N-2}}{N^2 \binom{2 N}{N}^2}\\
&=\frac{(-1)^{N+1} 2^{4 N-7} (4 N-3) \binom{4 N-4}{2 N-2}}{2 N-1}\in \mathbb{Z}
\end{align*}
since
\begin{align*}
\frac{\binom{4 N-4}{2 N-2}}{2 N-1}=\binom{4N-4}{2N-2}-\binom{4N-4}{2N-3}.
\end{align*}
From Lemma \ref{my-lemma-1}, it follows that
\begin{align*}
2N^2\binom{2N}{N}^2\biggm|(-4096)^{N-1} \sum_{n=0}^{N-1}F(n,0)=\sum_{n=0}^{N-1}\left(20 n^2+8 n+1\right) \binom{2 n}{n}^5 (-4096)^{ N- n-1},
\end{align*}
which is nothing but the divisibility \eqref{my-div-1}.

\qed

\noindent
\emph{Proof of Theorem \ref{my-theorem-2}.}
Let
\begin{align*}
F(n,k)=\frac{ 16^{k-4 n} \left(120 n^2-84 k n-10 k+34 n+3\right) \binom{2 n}{n}^3\binom{2 k+4 n}{k+2 n} \binom{k+2 n}{n} \binom{k+n}{2 k} \binom{n}{k}}{\binom{2 k}{k}^2}
\end{align*}
and
\begin{align*}
G(n,k)=\frac{n^2 2^{4 k-16 n+10} \binom{2 n}{n}^3 \binom{n}{k} \binom{k+n}{n-k} \binom{k+2 n-1}{n-1} \binom{2 k+4 n-2}{k+2 n-1}}{\binom{2 k}{k}^2}.
\end{align*}
The numbers $F(n,k)$ can be found in \cite{gui,he2,zudilin} and the numbers $G(n,k)$ can be obtained by the Gosper algorithm for
$F(n,k-1)-F(n,k)$; see \cite{gosper}. It is easy to verify that this pair $(F,G)$ satisfies the relation \eqref{wzpair}.

We first notice that for all nonnegative integers $N\geq2$ and $N\geq k$,
\begin{align}\label{div-2-Integer-2}
\frac{2^{16N-16}G(N,k)}{2N^2\binom{2N}{N}^2}&=\frac{2^{4 k-7} \binom{2 N}{N} \binom{N}{k} \binom{k+N}{N-k} \binom{k+2 N-1}{N-1} \binom{2 k+4 N-2}{k+2 N-1}}{\binom{2 k}{k}^2}\in \mathbb{Z}
\end{align}
and
\begin{align*}
&\frac{2^{16N-16}F(N-1,N-1)}{2N^2\binom{2N}{N}^2}\\
&\quad=\frac{3\cdot 2^{4 N-5} \left(12 N^2-16 N+5\right) \binom{2 N-2}{N-1} \binom{3 N-3}{N-1} \binom{6 N-6}{3 N-3}}{N^2 \binom{2 N}{N}^2}\\
&=\frac{3\cdot 2^{4N-7}(6N-5)!(N-1)!}{(2N-1)!(2N-2)!(3N-3)!}\in \mathbb{Z},
\end{align*}
in view of Lemma \ref{lem:2-div_2-2} and Lemma \ref{lem:1-div}, respectively.

Thus, it follows that
\begin{align*}
2N^2\binom{2N}{N}^2\biggm|2^{16N-16} \sum_{k=1}^{N-1}G(N,k).
\end{align*}
Therefore, using Lemma \ref{my-lemma-1}, we arrive at
\begin{align}\label{div-2}
2N^2\binom{2N}{N}^2\biggm|2^{16N-16} \sum_{n=0}^{N-1}F(n,0).
\end{align}
Since
\begin{align*}
2^{16N-16} \sum_{n=0}^{N-1}F(n,0)=\sum_{n=0}^{N-1}\left(120 n^2+34 n+3\right) \binom{2 n}{n}^4 \binom{4 n}{2 n} 65536^{N-n-1},
\end{align*}
 we obtain the divisibility \eqref{my-div-2} by \eqref{div-2}.
This finishes the proof of Theorem \ref{my-theorem-2}.
\qed

\vspace{.3cm}

\noindent{\bf Acknowledgments.} The author would like to thank the referee for his/her helpful comments and suggestions.
 This work was supported by the Scientific Research Program of the Higher Education Institution of
Xinjiang Uygur Autonomous Region (No.~XJEDU2016S032).

\end{document}